\numberwithin{equation}{section} \makeatletter
\renewcommand{\subsection}{\@startsection
{subsection}{2}{0mm}{\baselineskip}{-0.25cm}
{\normalfont\normalsize\bf}} \makeatother
\newtheorem{theorem}{Theorem}[section]
\newtheorem{lemma}[theorem]{Lemma}
\newtheorem{corollary}[theorem]{Corollary}
\theoremstyle{definition}
\newtheorem{ass}[theorem]{Assumption}
\theoremstyle{remark}
\newtheorem{remark}[theorem]{Remark}
\newtheorem{example}[theorem]{Example}
\def \F {\mathcal F}
\def \P {\mathbf P}
\def \Q {\mathbf Q}
\def \R {\mathbb R}
\def \bF {\mathbb F}
\def \bE {\mathbb E}
\def \bN {\mathbb N}
\def \CAT {\textrm{CAT}}
\newcommand{\ud}{\mathrm d}
\newcommand{\esp}[2][\mathbb E] {#1\left[#2\right]}
\begin{document}

\author[K.~Colaneri]{Katia Colaneri}\address{Katia Colaneri, Department of Economics and Finance, University of Rome Tor Vergata, Via Columbia 2,
00133 Roma, IT.}\email{katia.colaneri@uniroma2.it}

\author[R.~Frey]{R\"udiger Frey}\address{R\"udiger Frey, Institute for Statistics and Mathematics, Vienna University of Economics and Business,
Welthandelsplatz, 1, 1020 Vienna, Austria }\email{rfrey@wu.ac.at}\thanks{We are grateful to Verena K\"ock and to two unknown Referees for useful comments and suggestions. The work of Katia Colaneri was partially supported by INdAM-GNAMPA through projects UFMBAZ-2019/000436 and U-UFMBAZ-2020-000791.}

\title[Classical solutions of PIDEs]{Classical solutions of the Backward PIDE  for  Markov Modulated Marked Point Processes and Applications to \\ CAT Bonds}


\begin{abstract}
 The objective of this paper is to give conditions ensuring  that the backward  partial integro differential equation   associated with  a
 multidimensional jump-diffusion with a pure jump component has a unique classical solution; that is the solution is continuous, twice
 differentiable  in the diffusion component and  differentiable in time. Our proof uses a probabilistic argument  and extends the results of
 \citet{bib:pham-98} to processes with a pure jump component  where the jump intensity is modulated by a diffusion process. This result is
 particularly useful in some applications to pricing and hedging of financial and actuarial instruments, and we provide an example to pricing of
 CAT bonds.
\end{abstract}

\maketitle

{\bf Keywords}:  Partial integro differential equations, Classical solution, Markov modulated marked point process, Cauchy problem, CAT Bonds.

\section{Introduction}
This paper studies  conditions for the existence of  smooth solutions for certain partial integro-differential equations (PIDEs) associated with the generator of a Markov jump-diffusion process with at least one pure jump component. In concrete terms, we consider a Markov process of the form $X=(Z,L)$, where $L$ is a pure jump process and $Z$ is a general $d$-dimensional jump diffusion that
modulates the local characteristics (jump intensity and jump-size distribution) of $L$.  Given functions $c,f$ from $[0,T] \times \R^{d+1}$ to $\R$
and $g$ from $\R^{d+1}$ to $\R$, we define the  function $v$ by
\begin{equation} \label{eq:def-v-intro}
v(t,x)= \esp{\int_t^T e^{-\int_t^s c(u, X_u) \ud u} f(s, X_s) \ud s + e^{-\int_t^T c(s, X_s) \ud s} g(X_T)\mid X_t=x}.
\end{equation}
The goal of the paper  is to give regularity conditions on  $c$, $f$ and $g$ and on the generator $\mathcal L$ of the process  $X$  which ensure that
$v$ is a classical solution (i.e. Lipschitz continuous, $\mathcal C^2$ in $z$, $\mathcal{C}^1$ in $t$)  of the backward
PIDE associated with  $\mathcal{L}$.

Jump processes with characteristics modulated by a (jump) diffusion  are frequently used  in non-life insurance,  for instance in the valuation of loss layers  or {\em catastrophe bonds}  (CAT bonds), and in specific  fields of finance such as credit risk modelling  or tick data models for  high frequency trading. Functions $v$ of the form  \eqref{eq:def-v-intro} arise  in  pricing and hedging problems, where the function $f$ may represent instantaneous dividend payments, function $c$ the discount rate and function $g$ the terminal  payoff. The fact that $v$ is a \emph{classical} solution  of the backward  PIDE (and not merely a viscosity solution) is
essential in this context, as it allows to compute the  \emph{martingale representation} of the {price process}  $(v(t, X_t))_{t \ge 0}$ and, consequently, hedging strategies (see, e.g. \citet{bib:frey-00b}, or the discussion in Section \ref{sec:CAT} below).
To further illustrate the financial relevance of our results we investigate in detail the pricing and hedging of a CAT bond. CAT bonds are obligations issued
by (re)insurance companies to transfer to financial markets  the risk of low-frequency high-severity events such as natural catastrophes; if
cumulative losses due to such events exceed a (high) threshold, coupons and face value of the bond are reduced. We show that, in a typical actuarial
model, the pricing  of CAT bonds leads to a PIDE with a pure jump component, and that the hedging requires that the PIDE has a classical solution.

While the PDE characterization of  $v$ is  well understood in the case of diffusion processes, where $\mathcal{L}$ is a second-order differential operator, there are only a few contributions that study the jump-diffusion case, that is when  $\mathcal{L}$ is an integro differential operator, see  for instance \citet{bib:gihman-skohorod-80, bib:bensoussan-lions-82, bib:pham-98, bib:davis-lleo-13}. Moreover, this limited literature does not cover the case where $X$ has a pure jump component, and it makes quite restrictive regularity assumptions on the coefficients appearing in $\mathcal{L}$. Most relevant for our analysis are the results of \citet[Section~5]{bib:pham-98}, where existence and uniqueness of
a smooth solution for the backward PIDE is obtained in the case where the process $X$ can be written as the solution of an SDE driven by a Brownian motion and
an exogenous Poisson random measure. His analysis relies on two strong assumptions:  first, the coefficients in the SDE representation of $X$
satisfy a strong Lipschitz assumption; second, the diffusion part of $\mathcal{L}$ is uniformly elliptic.  While these two assumptions look quite natural, there are  many practically relevant situations where they are not met. For instance, the assumptions that the jumps of the state process $X$ are driven by a Poisson random measure with Lipschitz jump size coefficients excludes models that employ (compound) Cox  processes (essentially marked point processes with stochastic jump intensity). Moreover, the ellipticality assumption on the instantaneous covariance matrix   of $X$ implies that the diffusion part cannot be degenerate in any direction and hence it excludes  processes with a pure jump component. These points are clarified in more details in Remark \ref{rem:Pham}.

The goal of this paper is therefore, to extend the results of \citet[Section~5]{bib:pham-98} to the more general situation  where the multidimensional jump-diffusion process $X$ may have a pure jump component, and to weaken  the regularity assumptions on the integral part of the generator $\mathcal{L}$ of $X$, so to include the case where the jumps of $X$ are described by a random measure with Markov modulated compensator. Our approach is based on a change of measure argument:  loosely speaking we start from a reference probability space where the local characteristics of $L$ are deterministic and we revert to the original model by changing probability. We consider the extended state process given by the pair $(X, \xi)$, where $\xi$ is the martingale density of the measure change. Under the reference probability, the new state process  falls under the {\em viscosity} modeling framework of \citet{bib:pham-98}. Using Bayes formula and the results of
\citet{bib:pham-98} we then obtain that $v$ is the unique viscosity solution of the backward PIDE associated with the operator $\mathcal L$.  Finally, in order to show that $v$ is also a classical solution,   weneed to apply a fixed point argument.

The reminder of the paper is organized as follows. In Section \ref{sec:pb} we introduce the problem and the main assumptions. In
Section~\ref{sec:CAT} we discuss the pricing and hedging of CAT bonds. In Section \ref{sec:change} we construct the process $X$ via change of measure.
Finally,  we prove existence and uniqueness for the solution to the  backward PIDE in Section \ref{sec:theorem}.

\section{Modeling framework and problem formulation}\label{sec:pb}

We fix a  probability space $(\Omega, \F, \P)$, a time horizon $T$ and a right continuous and complete filtration $\bF$. Consider measurable
functions $a:[0,T]\times \R^d\times \R \to \R^d$ and $b:[0,T]\times \R^d\times \R \to \R^{d\times d}$, $\gamma^Z:[0,T]\times \R^d\times \R\times E
\to \R^d$ and $\gamma^L:[0,T]\times \R^d\times \R\times E \to \R$, where $(E, \mathcal E)$ is a separable measurable  space. Define the symmetric
matrix $\Sigma(t,z,l) = (\sigma_{i,j}(t,z,l),  \  i,j=1,\dots, d) $,  by
$$ \Sigma(t,z,l) = b(t,z,l)b^\top (t,z,l).$$
Throughout the paper we use the following notation for partial derivatives: for every function $h:[0,T] \times  \R^d \times \R \to \R$ which is
$\mathcal C^2$ in $z$ and continuous in $l$,  we write $ h_{z_i}$ for the first derivatives of $h$ with respect to $z_i$ for $i \in \{1, \dots d\}$
respectively, $h_{z_i,z_j}$  for second derivatives, for $i,j\in \{1, \dots, d\}$,  and finally  $h_t$ denotes  the first derivative with respect to
time.

We assume that $(\Omega, \F, \P)$ supports a RCLL process $X=(Z,L)$, which is the unique solution of the martingale problem associated with the
following (time-inhomogeneous) integro-differential operator $\mathcal L_t$
\begin{equation}\label{eq:generator_P}
\begin{split}
\mathcal{L}_t \varphi(x)=\mathcal{L}_t \varphi(z,l):=&\sum_{i=1}^d a_i(t,z,l) \varphi_{z_i}(z,l)+\frac{1}{2}\sum_{i,j=1}^d \sigma_{i,j}(t,z,l)
\varphi_{z_i, z_j}(z,l) \\
&+  \int_E \big(\varphi(z+\gamma^Z(t,z,l,u), l+\gamma^L(t,z,l,u))-\varphi(z,l)\big ) \nu(t, x;\ud u)\,,
\end{split}
\end{equation}
for every $(z,l)\in \R^d\times \R$, $t \in [0,T]$ and every function $\varphi: \R^d\times \R\to \R$ which is $\mathcal C^2$ in $z$ and continuous in
$l$, bounded with bounded derivatives.

It will be shown later  that under some regularity conditions (specifically
Assumptions~\ref{ass:regularity} and \ref{ass:problem2}) the unique solution to the martingale problem for the generator $\mathcal{L}_t$ in
\eqref{eq:generator_P} exists, see Corollary~\ref{cor-martingale-problem} below.
This result is  relevant from both a theoretical and also an applied point of view. In fact,  problems involving Markov processes $X=(Z,L)$ with a generator of the form \eqref{eq:generator_P} are largely used  in an actuarial context   (see, e.g.
\citet{bib:grandell-91,bib:ceci-hedging2015}) and in a financial context (see, e.g. \citet{bib:bielecki-rutkowski-02a, bib:cartea-jaimungal-penalva-15,
bib:colaneri-eksi-frey-szolgyenyi-19,bib:frey-runggaldier-01}); a specific application is discussed in Section~\ref{sec:CAT} below.

To illustrate our setup  we now give the generator for two simple  examples.
\begin{example}\label{examples}
First, we consider the case  where $L$ is a time-homogeneous Cox process with intensity $\lambda(Z_t)$, where $Z$ is a one dimensional diffusion
following the dynamics $\ud Z_t = a(Z_t) \ud t + b(Z_t) \ud W_t$, for a  Brownian motion $W$.  The generator $\mathcal{L}$ of the process $X=(Z,L)$
reads as
\begin{equation}
\mathcal L \varphi(x)=\mathcal{L} \varphi(z,l)= a(z)\varphi_{z}(z,l)+\frac{1}{2} b^2(z) \varphi_{z,z}(z,l)+  [\varphi(z, l+1)-\varphi(z,l)]
\lambda(z).
\end{equation}
Second, assume more generally  that  $L$ is  a time-homogenous compound Cox process with jump intensity $\lambda(Z_t)$ and jump size distribution
$\mu(\ud u)$ on $\R$. We still assume that $Z$ is a one dimensional diffusion of the same type as before. In this case the  generator $\mathcal L$
of $X=(Z,L)$ has the form
\[
\mathcal L \varphi(z,l)=a(z) \varphi_z(z,l)+ \frac{1}{2}b^2(z) \varphi_{zz}(z,l)+ \int_{\R}\left(\varphi(z,
l+u)-\varphi(z,l)\right)\lambda(z)\mu(\ud u);
\]
in particular $\nu(z,l;\ud u) = \lambda(z) \mu(\ud u)$.  Note that our setup goes beyond compound Cox processes presented in these two examples,
since the form of the generator  in \eqref{eq:generator_P} encompasses also  models with joint jumps in $L$ and $Z$. This feature can be useful to model self-exciting phenomena.
\end{example}

\vspace{1ex}

We continue with the problem formulation.  Let $g:\R^{d+1}\to \R$ be a payoff function, $f:[0,T]\times\R^{d+1}\to \R$ a dividend rate function  and
$c:[0,T]\times\R^{d+1} \to \R$ a discount rate.
In the reminder of the paper we work under the following assumptions.

\begin{ass}\label{ass:regularity}
\begin{itemize}
\item[]
\item[(A0)] The functions $a,b,\gamma^Z, \gamma^L, f, g$ and $c$ are continuous.
\item[(A1)]  The functions $a$ and $b$ are locally Lipschitz in $(t,x)$ and Lipschitz in $x$ for all $t \in [0,T]$.
\item[(A2)] There exists a finite measure $\widetilde \nu(\ud u)$ on $(E, \mathcal E)$  such that the  measure $\nu(t,x;\ud u)$ is equivalent to
    $\widetilde \nu(\ud u)$; the Radon Nikodym derivative $\nu(t,x,u):= (\ud \nu(t,x)/\ud \widetilde \nu) (u)$ satisfies $\nu(t,x,u) \le 1$ for
    all  $(t,x,u) \in [0,T] \times \R^{d+1}\times E$.
\item[(A3)]  The functions $\gamma^Z$, $\gamma^L$ and  $\nu$ satisfy for all $t \in [0,T]$ and $x,y\in \R^{d+1}$ .
\begin{gather}
|\gamma^Z(t,x,u)-\gamma^Z(t,y,u)|+|\gamma^L(t,x,u)-\gamma^L(t,y,u)| + |\nu(t,x,u) - \nu(t,y,u)| \leq \rho(u)|x-y|,\\
|\gamma^Z(t,x,u)|+|\gamma^L(t,x,u)|\leq \rho(u),
\end{gather}
where the function $\rho:E\to \R^+$ is such that $\int_E\rho^2(u) \widetilde \nu(\ud u)<\infty$.
 \item[(A4)] The function $c$ is bounded and locally H\"{o}lder continuous.
 \item[(A5)] The functions $g$ and $f$ are bounded and satisfy for every $t,s\in [0,T]$ and $x,y\in \R^{d+1}$
\begin{align}
|f(t,x)-f(s,y)|+|g(x)-g(y)|\leq K(|t-s|+|x-y|).
\end{align}
\end{itemize}
\end{ass}

Note that  by Assumption (A2), $\sup_{(t,x)\in [0,T]\times \R^{d+1}}\nu(t,x,E)<\widetilde{\nu}(E)<\infty$, that is (A2) implies  that the jump
intensity of $L$ is bounded.

For  \emph{fixed} $l \in \R$ we now introduce the differential operator ${\mathcal L}^*$  by
\begin{align}\label{eq:generator*}
{\mathcal L}_{(t,l)}^* \varphi(z)=&\sum_{i=1}^d a_i(t,z,l) \varphi_{z_i}(z)+\frac{1}{2}\sum_{i,j=1}^d \sigma_{i,j}(t,z,l) \varphi_{z_i, z_j}(z)
\end{align}
for every $z\in \R^d$ and $t \in [0,T]$ and every function $\varphi: \R^d\to \R$ with $\varphi \in \mathcal C^2$, bounded with bounded derivatives.
Loosely speaking, ${\mathcal L}^*$ is the diffusion part of $\mathcal{L}$.

\begin{ass}\label{ass:problem2}
For every fixed $l \in \R$ and every bounded and Lipschitz continuous function $F:[0,T]\times\R^{d+1}\to \R$  the Cauchy  problem
\begin{align}
&\psi_t(t,z,l)+{\mathcal L}_{(t,l)}^* \psi(t,z,l) + F(t,z,l) = c(t,z,l) \psi(t,z,l)   \, \quad (t,z)\in[0,T]\times \R^d,\\
&\psi(T,z,l) = g(z,l), \quad z \in \R^d,
\end{align}
has a unique bounded classical solution.
\end{ass}
Sufficient conditions for Assumption~\ref{ass:problem2} to hold are given, for instance, in \citet[Chapter~1]{bib:friedman-64}. They amount to
assuming further to (A0)--(A5) in Assumption \ref{ass:regularity} that the functions $a(t,z,l)$ and $b(t,z,l)$ are bounded
and that the matrix $\Sigma(t,z,l)$ is uniformly elliptic in $z$ on $\R^d$, that is, there exists $\widetilde C>0$ such that for any $\zeta \in
\R^d$, $\zeta^\top \Sigma(t,z,l) \zeta\geq \widetilde C \|\zeta\|^2$ for every $(t, z,l)\in [0,T]\times \R^{d}\times \R$.
In the case where $\mathcal L^*$ is the generator of an affine diffusion (not necessarily strictly elliptic), existence and uniqueness of the
solution of the Cauchy problem in Assumption \ref{ass:problem2} is discussed, for instance in \citet{cordoni2013transition}.

The  following theorem is the main result of the paper.

\begin{theorem}\label{thm:smooth_soln}
Let (A0)--(A5) in Assumption \ref{ass:regularity} and Assumption \ref{ass:problem2} hold. Then the function $v$ given by
\begin{equation}\label{eq:value_function}
v(t,z,l)=\esp{\int_t^T e^{-\int_t^s c(u, Z_u) \ud u} f(s, Z_s, L_s) \ud s + e^{-\int_t^Tc(s, Z_s) \ud s} g(Z_T, L_T)\mid (Z_t, L_t)=(z,l)},
\end{equation}
is bounded, continuous on $[0,T]\times \R^d \times \R$, Lipschitz in $x=(z,l)$ uniformly in $t$ and, for fixed $l$, $\mathcal{C}^1$ in $t$ and
$\mathcal{C}^2$ in $z$. Moreover $v$ is a classical solution of the Cauchy problem
\begin{align}
&v_t(t,z,l)+\mathcal{L}_tv(t,z,l)+f(t,z,l) = c(t,z,l)v(t,z,l), \quad (t,z,l)\in[0,T]\times \R^d\times \R, \label{eq:PDE_w_integral}\\
&v(T,z,l)=g(z,l), \quad (z,l)\in \R^d\times\R.
\end{align}
\end{theorem}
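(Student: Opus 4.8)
The plan is to treat the PIDE \eqref{eq:PDE_w_integral} as a fixed-point problem in which the non-local part of $\mathcal{L}_t$ is moved to the right-hand side and regarded as an inhomogeneous source, so that for each frozen value of $l$ what remains is a linear parabolic Cauchy problem of exactly the type covered by Assumption~\ref{ass:problem2}. Writing the integral operator as $\mathcal{I}_t \varphi(z,l) = \int_E [\varphi(z+\gamma^Z(t,z,l,u), l+\gamma^L(t,z,l,u)) - \varphi(z,l)]\,\nu(t,z,l,u)\,\widetilde\nu(\ud u)$, we have $\mathcal{L}_t = \mathcal{L}^*_{(t,l)} + \mathcal{I}_t$, and a function $w$ solves \eqref{eq:PDE_w_integral} if and only if, for each fixed $l$, it solves $\psi_t + \mathcal{L}^*_{(t,l)}\psi + (f + \mathcal{I}_\cdot w) = c\,\psi$ with terminal datum $g(\cdot,l)$. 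Let $\mathcal{B}$ be the space of bounded functions on $[0,T]\times\R^{d+1}$ that are Lipschitz in $x=(z,l)$ uniformly in $t$, and for $w\in\mathcal{B}$ let $\Phi(w)$ be the unique bounded classical solution, supplied by Assumption~\ref{ass:problem2} with source $F=f+\mathcal{I}_\cdot w$, of this frozen problem; equivalently $\Phi(w)(t,z,l)=\esp{\int_t^T e^{-\int_t^s c(u,Z^{t,z,l}_u,l)\ud u}(f+\mathcal{I}_\cdot w)(s,Z^{t,z,l}_s,l)\,\ud s + e^{-\int_t^T c\,\ud u} g(Z^{t,z,l}_T,l)}$, where $Z^{t,z,l}$ is the diffusion generated by $\mathcal{L}^*_{(\cdot,l)}$ started at $z$ at time $t$. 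The first task is to verify that $\Phi$ is a self-map of $\mathcal{B}$: boundedness follows from the maximum principle and the bounds on $f,g,c$ and on $\nu(t,x;E)\le C$ in (A2); Lipschitz continuity in $z$ from the (locally) Lipschitz dependence of the flow $z\mapsto Z^{t,z,l}_s$ granted by (A1), together with (A4)--(A5) and the Lipschitz regularity of the source, the latter using (A3) and the Lipschitz property of $w$; and Lipschitz continuity in the degenerate direction $l$ is transported in the same way, treating $l$ as a parameter in $a,b,c$, in $f,g$ and in the jump shifts $\gamma^Z,\gamma^L,\nu$, with a Gronwall argument keeping the Lipschitz constant finite.

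Next I would establish the contraction. The representation gives $|\Phi(w_1)-\Phi(w_2)|(t,z,l)\le \esp{\int_t^T e^{-\int_t^s c}\,|\mathcal{I}_s(w_1-w_2)|(Z^{t,z,l}_s,l)\,\ud s}$, and since $|\mathcal{I}_s(w_1-w_2)|\le 2C\sup_x|(w_1-w_2)(s,x)|$ by (A2), iterating this Volterra-type estimate yields $\|\Phi^n w_1 - \Phi^n w_2\|_\infty \le \tfrac{(KT)^n}{n!}\|w_1-w_2\|_\infty$ for a constant $K$ depending only on $C$ and $\|c\|_\infty$. Hence some power of $\Phi$ is a contraction, so $\Phi$ has a unique fixed point $v\in\mathcal{B}$ on all of $[0,T]$ with no smallness restriction; the decisive ingredient is the boundedness of the jump intensity in (A2), which turns the jump term into a summable perturbation of the diffusion. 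By construction $v=\Phi(v)$, so its source $f+\mathcal{I}_\cdot v$ is bounded and Lipschitz in $x$, and Assumption~\ref{ass:problem2} then gives that $v$ is, for fixed $l$, $\mathcal{C}^1$ in $t$ and $\mathcal{C}^2$ in $z$, and solves \eqref{eq:PDE_w_integral} classically with terminal condition $g$.

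It then remains to identify this classical solution with the expectation \eqref{eq:value_function}. For this I would apply the Dynkin/It\^o formula to $s\mapsto e^{-\int_t^s c(u,X_u)\ud u}\,v(s,X_s)$ along the full dynamics of $X=(Z,L)$: the $\mathcal{C}^{1,2}$ regularity just obtained makes this legitimate, the PIDE cancels the finite-variation part, and boundedness of $v$ together with (A2)--(A3) upgrades the resulting local martingale to a genuine martingale, so that taking conditional expectations reproduces exactly \eqref{eq:value_function}. As a by-product this yields uniqueness for the martingale problem associated with $\mathcal{L}$, as announced after \eqref{eq:generator_P}.

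I expect the main obstacle to be the self-map step, and specifically the uniform Lipschitz estimate in the non-elliptic variable $l$. In the $z$-direction ellipticity provides smoothing and the flow estimates are standard, but there is no smoothing in $l$, so Lipschitz regularity there can only be \emph{transported} from the data through the frozen coefficients and, above all, through the operator $\mathcal{I}$, which couples the two directions by shifting $z$ and $l$ simultaneously. Controlling this coupling, and checking that $f+\mathcal{I}_\cdot w$ carries enough time-regularity (beyond the bare continuity in (A0)) to legitimately invoke Assumption~\ref{ass:problem2}, is the delicate part; the bounds in (A3) on $\gamma^Z,\gamma^L,\nu$ together with the integrability of $\rho$ are precisely what makes it go through.
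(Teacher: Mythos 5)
Your proposal is correct in outline, but it follows a genuinely different route from the paper. The paper never constructs the solution by iteration: it first realizes $X=(Z,L)$ on a reference probability space where the driving Poisson random measure has the deterministic compensator $\widetilde\nu(\ud u)\ud t$, introduces the Girsanov density $\xi$ (bounded thanks to (A2)), and recovers $v(t,x)=\widetilde v(t,x,\xi)/\xi$ by Bayes' formula. The extended system $(Z,L,\xi)$ is then driven by an \emph{exogenous} Poisson random measure with Lipschitz coefficients, so the viscosity-solution results of Pham (Theorem 3.1 and Proposition 3.3) apply and yield directly that $v$ is bounded, continuous, Lipschitz in $x$ uniformly in $t$, and a viscosity solution of the PIDE; after that, a \emph{single} application of Assumption~\ref{ass:problem2} with the frozen source $F_{[v]}$ (admissible because $v$ is already known to be bounded and Lipschitz), combined with uniqueness of viscosity solutions for the linear parabolic PDE, upgrades $v$ to a classical solution. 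Your scheme reverses this logic: you build a classical solution from scratch by Picard iteration of the frozen problem (essentially the fixed-point argument of Pham's Proposition 5.3, which the paper only "modifies" and short-circuits), and identify the fixed point with the expectation \eqref{eq:value_function} at the end by a verification argument. What your route buys is that it bypasses both the change of measure and viscosity theory entirely, so it is more elementary and self-contained. What it costs: (i) the space $\mathcal{B}$ is not complete under the sup norm in which your contraction acts, so the uniform-in-$n$ Lipschitz bound on the iterates (your Gronwall remark) is not a refinement but an indispensable step — without it the fixed point need not be Lipschitz and $f+\mathcal{I}_\cdot v$ need not be an admissible source for Assumption~\ref{ass:problem2}; and (ii) the verification step is a proof obligation the paper never faces: $v$ is only Lipschitz in $l$, and since $g$ is merely Lipschitz the second $z$-derivatives of $v$ need not stay bounded as $t\uparrow T$, so the It\^o argument must be run on $[t,T-\epsilon]$ with a localization, exploiting the finite activity of the jumps ($\nu(t,x;E)\le\widetilde\lambda$) to apply It\^o piecewise between jump times with $l$ frozen, and then passing to the limit using boundedness of $v$. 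These are standard arguments, but they are not free, and your sketch compresses them into one sentence.

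Two smaller remarks. First, your frozen source $f+\mathcal{I}_\cdot w$ corresponds to the PIDE $v_t+\mathcal{L}_t v-cv+f=0$, which is the sign convention consistent with the representation \eqref{eq:value_function} and with Assumption~\ref{ass:problem2}; the display \eqref{eq:PDE_w_integral} carries $f$ on the right-hand side with the opposite sign — an inconsistency internal to the paper's statement, not an error of yours. Second, your verification step, applied to an arbitrary bounded classical solution, also delivers uniqueness of classical solutions and uniqueness for the martingale problem, which the paper instead obtains by citing Pham's Proposition 5.2; it would be worth stating this explicitly, since the theorem's meaningfulness (the phrase "the unique solution of the martingale problem" in Section~\ref{sec:pb}) rests on it.
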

Note that in Theorem \ref{thm:smooth_soln}, we obtain different degrees of regularity  in $z$ and $l$:  $v$ is only Lipschitz in $l$ but $\mathcal
C^2$ in $z$. This reflects that there is a diffusion component in $Z$ so that the transition kernel of $X$ has a smoothing effect in the $z$
direction, whereas no such smoothing can be expected in the $l$ direction\footnote{{Although the reader may convey that the process $X$ is essentially a multidimensional jump-diffusion, with a possibly degenerate diffusion part, it is fundamental in our analysis to disentangle its (non-degenerate) jump-diffusion part $Z$ and its pure jump part $L$. Indeed, the process $Z$ allows to define the operator $\mathcal L^*$, and hence the problem in Assumption 2.3 for which existence and uniqueness of the solution is retrieved by classical literature, and for instance, implied by the property that the diffusion coefficient is uniformly elliptic. The main consequence of this fact is that the value function in the $z$ component inherits a higher degree of regularity due to the smoothing effect of the diffusion. A pure jump process $L$, instead, cannot bring more regularity than continuity to the value function in the component $l$.}}. { The proof of the theorem is given in Section~\ref{sec:theorem} and uses the results on viscosity solutions of \citet{bib:pham-98}. However, these results cannot be applied directly, as our setting does not fall under the assumptions of \citet{bib:pham-98}, particularly due to the fact that the jump part of the process $X$ is not driven by a Poisson random measure with Lipschitz coefficients but instead we have a random measure with Markov modulated compensator. Hence an intermediate step is needed. Precisely, we will construct
the model  via a change of measure,  in the same spirit as the reference probability approach in nonlinear filtering. This is discussed in
Section~\ref{sec:change}.}

\begin{remark}[Contribution of the paper]\label{rem:Pham}
In order to clarify the contribution of our paper we  compare our setting with that of \cite{bib:pham-98}. Section 5 of that paper is dedicated to the analysis of smooth solutions for the Cauchy problem associated to a linear parabolic partial-integro differential operator.
There are two reasons why we cannot directly apply those results.
First, \cite[Section 5]{bib:pham-98} assumes that the diffusion part of the process $X$ is uniformly elliptic (Assumption (H0) of that paper). Here we relax this hypothesis and consider the case where the  diffusion part  can be degenerate in some direction: indeed, ellipticality is needed only in the component $Z$ (see the discussion after Assumption \ref{ass:problem2}), and hence the process $X$ may have a pure jump component as well, which is represented by the process $L$. Second, in our paper we relax the   regularity assumptions on the jump coefficients in both $Z$ and $L$. As in \citet{bib:pham-98}, we consider  jump size functions $\gamma^Z$ and $\gamma^L$ that are Lipschitz. However, we do not restrict to the case where the jump times are generated by a Poisson random measure with deterministic compensator but allow also for stochastic jump intensities.  In fact, although it might be possible to write our general jump measure in terms of a Poisson random measure, this induces a transformation on the jump size coefficients that would no longer  satisfy the  Lipschitz conditions. We elaborate on this point  in Example~\ref{ex:cox-continued} below.  We would like to underline once more that our extensions are relevant for applications in insurance and finance, since problems involving jump-diffusion state processes with degenerate diffusions in some directions and pure jump components driven by random measures with Markov modulated compensators are frequently used in these fields.
\end{remark}

\begin{example}[Example~\ref{examples} continued] \label{ex:cox-continued} We now give  conditions ensuring that  Theorem~\ref{thm:smooth_soln} applies to the case of a
compound Cox process.  Assume that the functions $a(\cdot)$, $b(\cdot)$ and $\lambda(\cdot)$ are Lipschitz and  that $\lambda(z) \le \overline
\lambda < \infty$ for all $z \in \R$. In that case the generator $\mathcal{L}$ of $X=(Z,L)$ satisfies the regularity conditions from
Assumption~\ref{ass:regularity}. In fact, we may choose $E= \R$, $\gamma^Z \equiv 0$, $\gamma^L(t,z,u)=u$ and the reference measure $\widetilde \nu
(\ud u) =\overline \lambda \mu(\ud u)$ so that the density $\ud \nu(l,z;\ud u) /\ud \widetilde\nu (\ud u)$ is given by the Lipschitz function
$\nu(z,l,u) = \lambda(z)/{\overline{\lambda}} \le 1$. A sufficient condition for Assumption~\ref{ass:problem2} to hold is that the functions
$a(\cdot)$ and $b(\cdot)$ are bounded and that $b^2(z) \ge \underline{b}$ for some $ \underline{b} >0$.
Next, we explain why a compound Cox process is not covered by the original results of \citet[Section~5]{bib:pham-98}. \citet{bib:pham-98}
considers an integro differential operator with integral term of the form
\begin{equation} \label{eq:integral-term}
 \int_E [\varphi(z+\gamma^Z(z,l,u), l+\gamma^L(z,l,u))-\varphi(z,l)]\nu(\ud u)
\end{equation}
for a finite measure $\nu(\ud u)$ that is \emph{independent} of the state $x$ of the process, and assumes that the functions $\gamma^Z$
and $\gamma^L$ satisfy the Lipschitz condition (A3) from Assumption~\ref{ass:regularity}. It is indeed possible to write the integral part of the
generator of a compound Cox process in the form \eqref{eq:integral-term} if we construct $L$ via thinning. For doing this, we start from a compound
Poisson process $\overline L$ with constant jump intensity $\overline \lambda$ and jump size distribution $\mu$. At each jump time $\overline T_n$
of $\overline L$  we independently sample a standard uniform random variable $V$ and we retain the jump if $V \le
\frac{\lambda(Z_{T_n})}{\overline{\lambda}}$. This corresponds to a representation \eqref{eq:integral-term} for the integral part of the generator
of $X$: take    $E = [0,1] \times \R$ with elements $u=(v,w)$, let   $\nu(\ud u)  = \nu (\ud v \ud w)=  \overline{\lambda} \ud v \mu(\ud w)$, and
put
$$ \gamma^L(z,u) = \gamma^L(z,v,w) = w 1_{\{v \le \frac{\lambda(z)}{\overline{\lambda}}\}}\,.$$
Note however, that the function $\gamma^L$ is not continuous and, in particular, it does not satisfy the Lipschitz condition (A3). Hence, the
results of \citet{bib:pham-98} do not apply to a compound Cox process, not even his results on viscosity solutions, where the fact that $L$ is a
pure jump process is not an issue.
\end{example}

\section{Pricing and hedging of a CAT bond}\label{sec:CAT}

In order to show that our results are relevant in insurance and finance we now discuss the problem of pricing and hedging of a {\em catastrophe
bond} (CAT bond). CAT bonds are obligations with short maturities, (usually one to three years) issued by insurance and reinsurance companies to
transfer to financial markets the risk of extreme  losses in non-life  insurance.  The payoff of a CAT bond  depends on some underlying loss index
$L$ that measures the losses in a given pool of  insurance contracts with  specified  loss type, geographical loss area and reporting period.  If
the loss index stays below a given threshold,  investors (the buyers of the CAT bond) receive coupons  and the  face value at maturity; if the loss
index is higher than the threshold, coupon payments are reduced and the face value is repaid only partially. Buyers accept the additional risk for a
generous rate of return.

There exist many variations of CAT bonds, see for instance \citet{bib:cox-2000, bib:lee-2004, bib:jarrow2010, bib:jaimungal-14}. In this paper we
model the loss index as a pure jump process $L$.  We ignore coupon payments for simplicity, and we assume that at the payoff of the bond at the
maturity $T$ is given by the face value $F$, reduced by the payoff of a loss layer on $L_T$ with attachment points  $K_1$ and $K_2 = K_1 + \delta F$
for some $\delta \in [0,1]$. Formally, the payoff at maturity is thus given by $g(L_T)$ for the  bounded and Lipschitz continuous function
\begin{equation} \label{eq:CAT-payoff}
g(l) = F- \Big ( (l- K_1)^+ - (l -K_2)^+ \Big);
\end{equation}
in particular, for $L_T \le K_1$ the bond pays the face value in full and for $L_T >K_2 $ the bond has payoff $(1-\delta)F$. The percentage $\delta$
plays the same role as the loss given default in credit risk.
The goal is to determine the price of the CAT bond (in Section \ref{sec:pricing}) and a self-financing hedging strategy that allows to cover for the
interest rate risk (in Section \ref{sec:hedging}). Due to market incompleteness we address the hedging problem via a quadratic approach. Under
standard assumptions on the model dynamics the price of the bond is given by the solution of a PIDE with a pure jump component as
in~\eqref{eq:PDE_w_integral}. We show that a classical solution of this PIDE is needed to determine the hedging strategy, as the latter
involves the use of derivatives of the pricing function.

\subsubsection*{Comments.} Of course other payoff functions than the one in \eqref{eq:CAT-payoff} could be considered as well, provided that they are consistent with Assumption~A5. For instance,  \citet{bib:jaimungal-14} considers call and put options on the loss index\footnote{The payoff of a call option is unbounded, but  call options can usually be replaced by  put options via put-call parity.}.  More generally, our analysis could also be extended to  models as in \citet{bib:jarrow2010} where the ``default time'' $\tau$ of a CAT bond (the time when coupons and the repayment of the face value are reduced) is modeled as a doubly stochastic random time whose intensity might depend on the loss index.

\citet{bib:jaimungal-14} characterize the price of the CAT bond in terms of a  PIDE  that is similar to our pricing PIDE (Equation ~\eqref{eq:cat-pide} below). However
they simply assume the existence of a smooth solution and apply Fourier transform to compute the solution numerically. We mention a few other hedging problems in insurance and finance  where smooth solutions of a PIDE of the type \eqref{eq:PDE_w_integral} play a crucial role:      \citet{bib:ceci-hedging2015} consider
classical solution of a PIDE to determine  the hedge ratio of unit-linked life insurance contracts under partial information;  \citet{bib:ceci-colaneri-frey-koeck-20} study the hedging  of reinsurance counterparty credit risk; the  hedging of derivatives in a high frequency data setting is considered in \citet{bib:frey-runggaldier-01}. In all these papers  the existence of a classical solution of the PIDE is assumed and not established. By giving sufficient conditions for the existence of a smooth solution to the pricing PIDE our paper puts the computation of hedge ratios in these papers on a sound mathematical footing.

\subsection{The pricing problem}   \label{sec:pricing}

We introduce the dynamics of the loss index and of the short rate of interest. For this we fix a probability space $(\Omega, \F, \Q)$ with  a
complete and right continuous filtration $\bF$, and we interpret $\Q$ as the pricing measure. Markets for CAT bonds are incomplete, and therefore, the
choice of the  pricing measure $\Q$ is a  delicate modelling issue involving also the real-world measure $\P$; see for instance the discussion in
\citet{bib:ceci-colaneri-frey-koeck-20}. However, this question is not central to the present paper so that we specify the dynamics of all model
quantities directly under $\Q$. We also fix a time horizon $T$ and assume that $\F=\F_T$.

We begin with the loss index. We assume that the loss index is modelled as
\[
L_t=\sum_{n=1}^{N_t} U_n
\]
and that the loss amounts $(U_n)_{n \in \bN}$ are given by a sequence of independent identically distributed nonnegative random variables with
distribution $\mu(\ud u)$ on $[0, \infty)$.
Following \citet{bib:jaimungal-14} we assume that the $N=(N_t)_{t \in [0,T]}$ is a point  process with  intensity
$\lambda(t,Z_{t}^1)$ for a positive, bounded and Lipschitz function  $\lambda:[0,T]\times \R^+ \times \R \to (0, \bar \lambda]$, with $\bar \lambda
>0$. The factor $Z^1=(Z^1_t)_{t \in [0,T]}$ that affects the intensity has dynamics
\begin{equation} \label{eq:dZ-CATbond}
\ud Z_{t}^1= a_1(b_1-Z_t^1)\ud t + \sqrt{1-\rho^2} \sigma_1 \ud W_t^1 + \rho \sigma_1 \ud W^2_t + \gamma^Z \ud L_t
\end{equation}
for independent $\Q$ Brownian motions $(W^1_t)_{t \in [0,T]}$ and $(W^2_t)_{t \in [0,T]}$ and constants $a_1,b_1,\gamma^Z \ge 0$, $\sigma_1 >0$, $\rho \in (-1,1)$. Note that
\eqref{eq:dZ-CATbond} allows for exogenous fluctuations in the loss intensity (due to the diffusion term)  and for \emph{self-excitation}: a loss
event (a jump of $L$) causes an upward jump in the loss intensity which in turn raises the likelihood of future losses.   This effect is dampened
over time  by the mean reverting drift. The intensity dynamics  illustrates the rich modelling possibilities under Theorem~\ref{thm:smooth_soln}.

We model the short rate of interest by $r_t=r(Z_t^2)$ for an increasing bounded and Lipschitz continuous function  $r:\R \to \R$. For concreteness  we
assume that $Z^2=(Z^2_t)_{t \in [0,T]}$ follows a Vasicek model
\[
\ud Z_t^2=a_2(b_2-Z_t^2)\ud t + \sigma_2 \ud W_t^2, \quad Z_0^2 \in \R^+, ,
\]
with parameters $a _2,b_2,\sigma_2>0$. Moreover we fix two constants $\underline r < \overline r$ and we let $r(z) = (z \vee \underline{r} )\wedge\overline{r}$. Assuming that the factor processes $Z^1$ and $Z^2$ are correlated depicts a certain dependence between losses and the market. Such
effects are empirically observed: for instance natural catastrophes, or unexpected events such as the recent Covid-19 pandemic, affect both the loss index (typically by increasing the intensity of loss events) and the performance of financial markets.

We assume that a riskless zero coupon bond with maturity $T$ and a CAT bond with maturity $T$ and payoff $g(L_T)$ are traded. The price of the zero coupon bond is given by
$P_t =  \bE^\Q \big [ e^{-\int_t^T r(Z_s^2) \ud s} \mid \F_t \big ] $. Using Theorem~\ref{thm:smooth_soln}  we get that $P_t = p(t,Z_t^2) $ for a function $p(t,z_2)\in \mathcal C^{1,2}([0,T]\times \R)$  which solves
\begin{align}\label{eq:bond-price}
p_t(t,z_2)+\mathcal{L}_t^{Z^2}p(t,z_2)=r(z_2) p(t,z_2), \quad (t, z)\in [0,T)\times \R, \quad \mbox{ and } \  p(T,z_2)=1,
\end{align}
where $\mathcal{L}_t^{Z^2}\varphi(t,z_2)=a_2(b_2-z_2) \varphi_{z_2}(t,z_2)+\frac{1}{2}\sigma_2^2 \varphi_{z_2,z_2}(t,z_2)$, is the
generator of the process $Z^2$\footnote{Notice that this result is covered by the classical Feymann-Kac formula, as the process $Z^2$ follows a one dimensional diffusion with regular coefficients.}.
Moreover, we have the bond price dynamics
\begin{align}\label{eq:bond}
\ud P_t= P_t r(Z_t^2) \ud t + P_t \beta(t,Z_t^2) \ud W^2_t
\end{align}
where $\beta(t,Z_t^2)  = \sigma_2 p_{z_2}(t,Z_t^2)/p(t,Z_t^2)$. We underline that, for $\underline{r} $ small and $\overline{r}$ large we may identify  $Z_t^2$ and
$r_t$: the function $p(t,r)$ can be approximated by the explicit bond price formula in the Vasicek model and $\beta(\cdot)$ depends only on time;
see \citet[Ch. 5 and Ch. 10]{bib:filipovic-2009}.

Let $X=(Z,L)$, where $Z=(Z^1, Z^2)$ is the two dimensional factor process. By risk-neutral pricing the price of the CAT bond is given by
\begin{align}
P^{\CAT}_t=p^{\CAT} (t,X_t) =: \bE^\Q \Big [ e^{-\int_t^T r(Z_s^2) \ud s} g(L_T) \mid \F_t \Big ]\,.
\end{align}
To identify   the PIDE that characterizes the function $p^{\CAT}$ via Theorem~\ref{thm:smooth_soln} we first determine  the generator of $X$.
Let $z=(z_1,z_2)$ and define the measure   $\nu (t, z) $ on $\R$ by $\nu (t, z;\ud u) = \lambda(t, z_1) \mu(\ud u)$.  Then the process $X$ is Markov with generator
\begin{align}
\mathcal L_t \varphi(x) &= a_1(b_1-z_1)\varphi_{z_1}(z,l) + \frac{\sigma^2_1}{2}\varphi_{z_1,z_1}(z,l) +
a_2(b_2-z_2)\varphi_{z_2}(z,l) + \frac{\sigma^2_2}{2}\varphi_{z_2,z_2}(z,l)  \\&+ \rho\sigma_1\sigma_2\varphi_{z_1,z_2}(z,l) +
\int_{[0, \infty)}\! \left(\varphi(z_1+\gamma^Z u,z_2,l+u)-\varphi(z_1,z_2,l)\right) \nu (t, z,\ud u)\,.
\end{align}
By Theorem~\ref{thm:smooth_soln} the function $p^{\CAT}$ is the unique classical solution of the PIDE
\begin{align}
p^{\CAT}_t (t,z,l) & +\mathcal{L}_t p^{\CAT}_t (t,z,l) =  r(z_2) p^{\CAT}_t (t,z,l) \quad (t,z,l)\in[0,T]\times \R^2 \times [0,
+\infty),\label{eq:cat-pide}\\
p^{\CAT}(T,z,l) &= g(l), \quad (z,l)\in \R\times[0, +\infty). \label{eq:cat-final-cond}
\end{align}

\subsection{The hedging strategy} \label{sec:hedging}
In this section we address the problem of finding a hedging strategy for the CAT bond that allows the bond holder to eliminate the interest rate risk of the bond. We recall that the
market is incomplete since there are no financial instruments that allow to hedge the risk due to insurance  losses. In this setting we apply a
quadratic hedging criterion, namely, {\em mean variance hedging}, that allows to identify the unique self-financing strategy that covers the interest rate
risk and minimizes the difference between the portfolio value and the value of the CAT bond at maturity in $L^2$-sense. We now formalize the hedging problem.
Let $(h_t)_{t \ge 0}=(h^0_t, h^1_t)_{t \ge 0}$ be a self financing strategy, where $h^0$ represents the investment in the money market account with
the price $(P^0_t)_{t \ge 0}$ and $h^1$ is the number of shares invested in the bond with the price $(P_t)_{t \ge 0}$. The {\em discounted} value of
the strategy $h$ is given by
\[
\widetilde V_t(h)=v_0+\int_0^th^1_s \ud \widetilde P_s
\]
with $\widetilde P=P/P^0$ being the discounted value of the bond.
A self financing strategy $h$ is admissible if it is $\bF$-predictable and satisfies $\mathbb{E}\left[\int_0^T (h^1_t)^2 P_t^2 \ud t
\right]<\infty$\footnote{
This condition guarantees that the discounted value process $V(h)$ is a square integrable martingale.}.

Let $H=g(L_T)/P^0_T$ be the discounted payoff of the CAT bond.
The hedging problem consists on finding a self-financing strategy $h^*=(h^{0*}, h^{1*})$ with initial value $v_0$ which minimizes the {\em quadratic hedging error}
\[
\bE^\Q\left[\left(H -\widetilde V_T(h)\right)^2\right],\label{eq:error}
\]
over the set of all admissible strategies.  The minimizer $h^*$ is called the {\em
mean variance hedging strategy}. Since the process $\widetilde P$ is a square integrable martingale, it is well known (see for instance \citet{bib:follmer-86} or
\citet{schweizer2001guided}) that the optimal strategy can be determined using the Galtchouk--Kunita--Watanabe decomposition of the discounted CAT
bond price process $\widetilde P^{\CAT}= P^{\CAT}/P^0$ with respect to the discounted zero coupon bond price $\widetilde P$. This decomposition  is given by
\begin{align}\label{eq:GKW}
\widetilde P^{\CAT}_t=P^{\CAT}_0+\int_0^t \theta_s \ud \widetilde P_s + O_t
\end{align}
where $(O_t)_{t \in [0,T]}$ is a martingale null at $t=0$ and orthogonal to $\widetilde P$\footnote{Two martingales $M^1$ and $M^2$ are said to be orthogonal if the product $M^1M^2$ is  a martingale, or , equivalently, if  $M^1$ and $M^2$ have zero predictable quadratic covariation, i.e. $\langle
M^1, M^2\rangle_t=0$ for all $t \ge 0$.}.
The process $h^{1*}_t=\theta_t$ for all $t \in [0,T]$, provides the mean variance hedging strategy.
Moreover, at time $T$ it holds that $\widetilde P^{\CAT}-\widetilde V_T(h)=O_T$, so that  $O_T$ represents the hedging error.

To characterize the hedging strategy $(\theta_t)_{t \in [0,T]}$, we derive the martingale representation of the (discounted) CAT bond price process. Recall that
$P^\CAT_t=p^{\CAT}(t,Z_t,L_t)$, where $p^\CAT(t,z,l)$ satisfies the PIDE \eqref{eq:cat-pide}, with the final condition \eqref{eq:cat-final-cond}.
Hence,
\begin{align}
\frac{\ud \widetilde P^{\CAT}_t}{\widetilde P^{\CAT}_{t-}} =& \sqrt{1-\rho^2}  \beta^{\CAT}_1(t, Z_t, L_t) \ud W^1_t +  \left(\rho
\beta^{\CAT}_1(t, Z_t, L_t)+ \beta^{\CAT}_2(t, Z_t, L_t)\right) \ud W^2_t \\
&+ \int_{[0,+\infty)} \left(\alpha^{\CAT}(t, Z^1_t +\gamma^Z u,Z^2_t,L_{t^-}+u)-1\right) \left(m(\ud t, \ud z)-\nu (t,
Z^1_{t^-}, Z^2_{t},\ud u)\right)\,,
\end{align}
where $\beta^{\CAT}_i(t, z,l)=\sigma_i \ p^\CAT_{z_i}(t, z,l)/p^\CAT(t, z,l)$ for $i=1,2$, $\alpha^{\CAT}(t,z_1+\gamma, z_2, l+u)=p^\CAT
(t,z_1+\gamma, z_2, l+u)/p^\CAT(t, z_1, z_2, l)$, and the measure $m(\ud t, \ud u)$ denotes the jump random measure of the process $L$.
Since  $\ud \widetilde P_t = \widetilde P_t \beta(t,Z_t^2) \ud W^2_t $ we get
\begin{align}\label{eq:martingale_representation}
\ud \widetilde P^{\CAT}_t=& \frac{\widetilde P^{\CAT}_t}{\widetilde P_t \beta(t,Z_t^2)} \left(\rho \beta^{\CAT}_1(t, Z_t, L_t)+ \beta^{\CAT}_2(t,
Z_t, L_t) \right) \ud \widetilde P_s + \ud M_t
\end{align}
where $(M_t)_{t \in [0,T]}$ is a martingale null at $t=0$ and orthogonal to the process $\widetilde P$ \footnote{The process $(M_t)_{t \in [0,T]}$
satisfies
\begin{align}
\ud M_t=& \widetilde P^{\CAT}_t \sqrt{1-\rho^2} \beta^{\CAT}_1(t, Z_t, L_t) \ud W^{1}_t \\
&+ \widetilde P^{\CAT}_{t^-}\int_{[0,+\infty)} \left(\alpha^{\CAT}(t, Z^1_t +\gamma^Z u,Z^2_t,L_{t^-}+u)-1\right) \left(m(\ud t, \ud z)-\nu (t,
Z^1_{t^-}, Z^2_{t},\ud u)\right).
\label{eq:O}
 \end{align}
}.%
Comparing equation \eqref{eq:GKW} and \eqref{eq:martingale_representation}   we obtain the optimal strategy
\begin{align}h^*_t=\theta(t, z_1, z_2, l)= \frac{p^{\CAT}_{z_2}(t,z_1,z_2,l)}{p_{z_2}(t,z_2)}+ \rho
\frac{\sigma_1}{\sigma_2}\frac{p^{\CAT}_{z_1}(t,z_1,z_2,l)}{p_{z_2}(t,z_2)},
\end{align}
and the hedging error $O_t=M_t$ for all $t\in [0,T]$.

Notice that $\theta$ is written in terms of the derivative of the functions $p$ and $p^{\CAT}$, which means that the computation of the hedging strategy requires that these functions are regular and hence a classical solution of the PDE \eqref{eq:bond-price} and the PIDE \eqref{eq:cat-pide}, respectively.

\section{Construction via change of measure}\label{sec:change}

We start from a probability space $(\Omega, \F, \widetilde{\P})$ with a filtration $\bF$, that supports a   $d$-dimensional-Brownian motion $W$ and
a Poisson random measure $ N(\ud t, \ud u)$ on $[0,T]\times E$ with $(\bF, \widetilde \P)$-compensator $\widetilde{\nu}(\ud u) \ud t$, where
$\widetilde{\nu}$ and $E$ are as in (A2) of Assumption \ref{ass:regularity} .  Let $X=(Z,L)$ be the unique strong solution to the following system
of SDEs
\begin{align}
&\ud Z_t= a(t,X_t) \ud t+ b(t,X_t) \ud W_t + \int_E \gamma^Z(t, X_{t^-}, u) N (\ud t, \ud u), \quad Z_0=z\in \R^d, \label{eq:Z}\\
&\ud L_t= \int_E \gamma^L(t, X_{t^-}, u)  N (\ud t, \ud u), \quad L_0=l\in \R, \label{eq:L}
\end{align}
where the functions $a$, $b$, $\gamma^Z$ and $\gamma^L$ satisfy (A0),  (A1) and (A3) in Assumption \ref{ass:regularity}.
The process $X$ is Markov under $\widetilde \P$ with  the generator
\begin{align}\label{eq:generator_Ptilde}
\widetilde{\mathcal{L}}_t \varphi(z,l)=&\sum_{i=1}^d a_i(t,z,l) \varphi_{z_i}(z,l)+\frac{1}{2}\sum_{i,j=1}^d \sigma_{i,j}(t,z,l) \varphi_{z_i,
z_j}(z,l) \\
&+  \int_E [\varphi(z+\gamma^Z(t,z,l,u), l+\gamma^L(t,z,l,u))-\varphi(z,l)]\widetilde \nu(\ud u),
\end{align}
for every $(z,l)\in  \R^d\times \R$ and every $t \in [0,T]$ and every function $(z,l) \to \varphi(z,l)$, $C^2$ in $z$ and continuous in $l$, bounded with bounded derivatives.

Using the Radon Nikodym density $\nu(t,x,u) = (\ud \nu(t,x)/\ud \widetilde{\nu})(u)$ introduced in (A2) of   Assumption \ref{ass:regularity},
we define the process  $\xi=(\xi)_{t \in [0,T]}$ as  the stochastic exponential
\begin{align}\label{eq:xi}
\xi_t=1+\int_0^t\xi_{s^-}\int_{E} \big ( \nu(s, X_{s^-},u) -1 \big ) (N(\ud s, \ud u)-\widetilde \nu(\ud u)\ud s), \quad t \in [0,T].
\end{align}
Applying the Dol\`{e}ans-Dade exponential formula we get that
\begin{align}\label{eq:Girsanov_density}
\xi_t=\prod_{T_n\leq t} \nu(T_n, X_{{T_n}-},U_n)  \exp\left(\int_0^t\int_{E}\left(1-\nu(s, X_{s^-},u)\right)\widetilde \nu(\ud u) \ud s\right),
\quad t \in [0,T],
\end{align}
where here $(T_n, U_n)_{n \geq 1}$ is the sequence of jump times and corresponding jump sizes of the measure $ N(\ud t, \ud u)$.

In the sequel we will need the following lemma.

\begin{lemma}\label{lemma:Girsanov}
The process $\xi=(\xi_t)_{t \in [0,T]}$ is bounded and satisfies $\xi_t\leq e^{\widetilde{\nu}(E) t}$ $\widetilde \P-a.s.$, for every $t \in [0,T]$. Let $\P$ be
the probability measure equivalent to $\widetilde{\P}$ defined by $\left.\frac{\ud \P}{\ud \widetilde{\P}}\right|_{\F_T}=\xi_T$. Then under $\P$, $
N(\ud t, \ud u)$ is a random measure with compensator  $\nu(t, X_{t^-}, \ud u)\ud t$ and $W$ is an $(\bF, \P)$-Brownian motion.
\end{lemma}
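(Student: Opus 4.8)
The plan is to establish the three claims in Lemma~\ref{lemma:Girsanov} in order: the boundedness of $\xi$ with the explicit bound, the fact that $N$ has $\P$-compensator $\nu(t,X_{t^-},\ud u)\ud t$, and the fact that $W$ remains an $(\bF,\P)$-Brownian motion. First I would verify the bound on $\xi$ directly from the Dol\`{e}ans-Dade representation \eqref{eq:Girsanov_density}. By Assumption (A2) we have $\nu(t,x,u)\le 1$, so the product $\prod_{T_n\le t}\nu(T_n,X_{{T_n}-},U_n)$ over all jump times is bounded above by $1$ (each factor lies in $[0,1]$). For the exponential factor, since $0\le \nu\le 1$ the integrand $1-\nu(s,X_{s^-},u)$ is nonnegative and bounded by $1$, so the exponent is at most $\int_0^t\int_E \widetilde\nu(\ud u)\,\ud s = \widetilde\nu(E)\,t$, giving $\xi_t\le e^{\widetilde\nu(E)t}$ immediately. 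This establishes boundedness and simultaneously shows $\xi$ is a genuine (not merely local) martingale.

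Next I would confirm that $\xi$ is a true $(\bF,\widetilde\P)$-martingale, which legitimizes defining $\P$ through $\ud\P/\ud\widetilde\P|_{\F_T}=\xi_T$. From \eqref{eq:xi}, $\xi$ is a stochastic integral against the compensated measure $N(\ud s,\ud u)-\widetilde\nu(\ud u)\ud s$, hence a local martingale; combined with the uniform bound $\xi_t\le e^{\widetilde\nu(E)T}$ just obtained, a local martingale that is uniformly bounded is a true martingale, and $\espp[\widetilde\P]{\xi_T}=1$ so $\P$ is a probability measure equivalent to $\widetilde\P$ (equivalence follows since $\nu>0$ $\widetilde\nu$-a.e.\ by (A2), so $\xi_T>0$ $\widetilde\P$-a.s.).

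For the identification of the characteristics under $\P$, I would apply Girsanov's theorem for random measures and for Brownian motion (e.g.\ the jump-diffusion version as in Jacod--Shiryaev). The key computation is to identify the density process $\xi$ with the standard exponential form of a measure change for point processes: writing $\xi_t=\doleans{\int_0^\cdot\int_E(\nu(s,X_{s^-},u)-1)(N-\widetilde\nu\,\ud s)}_t$, the general theory states that under $\P$ the compensator of $N$ is multiplied by the factor $\nu(t,X_{t^-},u)$, turning the $\widetilde\P$-compensator $\widetilde\nu(\ud u)\ud t$ into $\nu(t,X_{t^-},u)\widetilde\nu(\ud u)\ud t = \nu(t,X_{t^-},\ud u)\ud t$, using the Radon--Nikodym relation from (A2). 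Since the Girsanov kernel $\nu-1$ involves only the jump part and has no continuous martingale component, the Brownian motion $W$ is left unchanged: its quadratic variation is preserved and the measure change adds no drift to $W$, so $W$ remains an $(\bF,\P)$-Brownian motion by L\'evy's characterization.

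I expect the main obstacle to be the careful invocation of the Girsanov theorem for the marked point process so as to obtain precisely the claimed $X_{t^-}$-dependent stochastic compensator, since one must check that the integrand $\nu(s,X_{s^-},u)-1$ satisfies the integrability conditions required for the density process to be well-defined and that the resulting compensator has the asserted form $\nu(t,X_{t^-},\ud u)\ud t$ rather than merely a predictable version of it. The integrability is guaranteed by (A2) (boundedness of $\nu$) together with the finiteness $\widetilde\nu(E)<\infty$, so the predictable quadratic variation of the stochastic integral in \eqref{eq:xi} is controlled; the main care is purely in matching the abstract Girsanov statement to our explicit setting and verifying that the mutual independence structure ($W$ versus $N$) guarantees the Brownian motion is unaffected.
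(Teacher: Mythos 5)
Your proposal is correct and follows essentially the same route as the paper: the bound $\xi_t\le e^{\widetilde\nu(E)t}$ is read off from the Dol\`{e}ans-Dade exponential form using $\nu\le 1$ from (A2), the true-martingale property follows because a bounded local martingale is a genuine martingale, and the identification of the $\P$-compensator and of $W$ is delegated to the Girsanov theorem for marked point processes (the paper cites Br\'{e}maud, Theorem VIII.2, which subsumes your explicit compensator computation and the L\'{e}vy-characterization argument for $W$). The only additions beyond the paper's proof are welcome details, namely the strict positivity $\xi_T>0$ from the equivalence assumption in (A2) and the observation that $\xi$ is purely discontinuous so no drift is added to $W$.
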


\begin{proof}
Since $\nu(t,x,u) \leq 1$ by (A2) in Assumption \ref{ass:regularity},  using the exponential form of $\xi$, we get that
\begin{align}
\xi_t\leq \exp\Big(\int_0^t \widetilde \nu(E) \ud s\Big) = e^{\widetilde \nu(E) t},\quad \P\mbox{-a.s.}, \ t \in [0,T].
\end{align}
Hence, the process $\xi$ is a true martingale as it is a bounded local martingale with $\widetilde{\mathbb{E}}\big [ \xi_T\big ] =1$, where
$\widetilde{\mathbb{E}}$ denotes the expectation under the probability measure $\widetilde \P$.
All the other claims follow directly from the Girsanov Theorem for marked point processes, see, e.g.  \citet[Theorem VIII.2]{bib:bremaud-81}.
\end{proof}

\begin{corollary}\label{cor-martingale-problem}
 Under Assumptions~\ref{ass:regularity} and \ref{ass:problem2}  there exists a unique solution of the martingale problem for the operator $\mathcal  L_t$ given in \eqref{eq:generator_P}.
\end{corollary}
\begin{proof}
{\em Existence.} By Lemma~\ref{lemma:Girsanov} we know that $W$ is an $(\bF, \P)$-Brownian motion and that the random measure $N(\ud u, \ud t)$ has the compensator $\nu(t, X_{t^-}, \ud u)\ud t$. Hence, for any function $\varphi:[0,T]\times \R^d\times \R\to \R$ which is $\mathcal C^1$ in $t$, $\mathcal C^2$ in $z$, continuous in $l$, bounded with bounded derivatives, by applying It\^o's lemma we get that
\[
\varphi(t,Z_t, L_t)=\varphi(0, Z_0, L_0)+\int_0^t\mathcal L_u\varphi(u,Z_u, L_u)+M_t
\]
where $M$ is the martingale given by
\begin{align*}
M_t=&\int_0^t b(s, X_s)\varphi_{z_i}(s, X_s) \ud W_s \\
&+ \int_0^t \int_E(\varphi(s,Z_{s^-}+\gamma^Z(s, X_{s^-},u), L_{s^-}+\gamma^L(s, X_{s^-},u))) (N(\ud u, \ud s)-\nu(s, X_{s^-}, \ud u)\ud s),
\end{align*}
so that under $\P$, the process $X = (Z,L)$ solves the martingale problem associated with $\mathcal{L}_t$.

{\em Uniqueness.} Here  we rely on  the well known result that the  martingale problem for $\mathcal{L}$ has a unique solution  if the marginal distributions of any solution process  $X$ are uniquely determined, see for instance \citet[Proposition 4.7 and Remark 4.8, Chapter 4]{bib:ethier-kurtz-86}. Now by
Theorem~\ref{thm:smooth_soln}, we get  existence of a smooth solution
to the backward PIDE with  $c(t,x)=0$ and $f(t,x)=0$ for all  bounded and Lipschitz continuous terminal conditions $g$, which immediately  implies uniqueness of the marginal distributions of $X$ and hence the result.
\end{proof}

\section{Proof of Theorem \ref{thm:smooth_soln}.}\label{sec:theorem}

This section is devoted to the proof of the main result of this paper. We denote $\widetilde \nu(E)=\widetilde \lambda$ and define the set
$\bar{D}\subseteq [0,T]\times \R^{d+1}\times \R^+$ as
\begin{align}
\bar{D}=\left\{(t,x,\xi)\in [0,T]\times \R^{d+1}\times \R^+: \xi \leq e^{\widetilde{\lambda}t}\right\}.
\end{align}
For $(t,x,\xi)\in \bar{D}$, let the function $\widetilde{v}$ be defined as
\begin{align}\label{eq:vtilde}
\widetilde{v}(t,x,\xi):=\widetilde{\mathbb{E}}\left[\xi_T\left(e^{-\int_t^Tc(s, X_s)\ud s}g(X_T) + \int_t^T e^{\int_t^s c(u, X_u)\ud u}f(s, X_s)\ud
s\right)|X_t=x, \xi_t=\xi\right],
\end{align}
where we recall that $\widetilde{\mathbb{E}}$ indicates the expectation under the probability measure $\widetilde{\P}$. By applying Bayes formula we
get that for every $(t,x,\xi)\in \bar{D}$,
\begin{align}
v(t,x)=\frac{\widetilde{v}(t,x,\xi)}{\xi}.
\end{align}
The reminder of the proof is organised as follows: in Step 1  we  consider the triple $\widetilde{X}=(Z,L,\xi)$ with the generator $\widetilde{\mathcal{L}}^{\widetilde{X}}$, and we use the results on viscosity solutions of \citet{bib:pham-98}
to show that $\widetilde{v}$ is a  Lipschitz continuous viscosity solution of  a backward equation involving the operator
$\widetilde{\mathcal{L}}^{\widetilde{X}}$. From this we can conclude in Step~2 that $v$ is
a  Lipschitz continuous viscosity solution of the {original} backward PIDE \eqref{eq:PDE_w_integral}. Finally, in Step~3  we  use a fixed
point argument to establish that $v$ is also a classical solution of that equation.

\subsection*{Step 1.}
First  we show that for $(t,x,\xi)\in \bar{D}$,
\begin{align} \label{eq:vtilde-2}
\widetilde{v}(t,x,\xi)=\widetilde{\mathbb{E}}\left[\xi_T e^{-\int_t^Tc(s, X_s)\ud s}g(X_T) +\int_t^T \xi_s e^{\int_t^s c(u, X_u)\ud u}f(s, X_s)\ud
s\mid X_t=x, \xi_t=\xi\right].
\end{align}
Indeed this follows from the sequence of  equalities
\begin{align}
&\widetilde{\mathbb{E}}\left[\xi_T \int_t^T e^{\int_t^s c(u, X_u)\ud u}f(s, X_s)\ud s|X_t=x, \xi_t=\xi\right] \\
&\qquad =\int_t^T \widetilde{\mathbb{E}}\left[\xi_Te^{\int_t^s c(u, X_u)\ud u}f(s, X_s)|X_t=x, \xi_t=\xi\right]\ud s\\
&\qquad =\int_t^T \widetilde{\mathbb{E}}\left[\xi_s e^{\int_t^s c(u, X_u)\ud u}f(s, X_s)|X_t=x, \xi_t=\xi\right]\ud s\\
&\qquad =\widetilde{\mathbb{E}}\left[\int_t^T \xi_s e^{\int_t^s c(u, X_u)\ud u}f(s, X_s)\ud s|X_t=x, \xi_t=\xi\right]
\end{align}
where we get the first and third equalities by applying the Fubini Theorem, since $\xi_T, c$ and $f$ are bounded, and the second equality follows from
the tower rule when conditioning on $\F_s$.

We now consider the triple $\widetilde{X}=(Z,L,\xi)$. Under Assumptions \ref{ass:regularity} the process $\widetilde X$ is a strong solution of the
system of SDEs \eqref{eq:Z}--\eqref{eq:L}--\eqref{eq:xi}, driven by an exogenous Poisson random measure. By Lemma \ref{lemma:Girsanov}, the process  $\xi$ is bounded and therefore we may consider the system on the state space $\bar{D}$.
Denote by $\widetilde{\mathcal{L}}^{\widetilde X}$ the $\widetilde{\P}$-Markov generator of the process $\widetilde{X}$. It holds that
\begin{align}
\widetilde{\mathcal{L}}^{\widetilde X}_t \varphi(z,l,\xi)&=\sum_{i=1}^d a_i(t,z,l) \varphi_{z_i}(z,l,\xi)+\frac{1}{2}\sum_{i,j=1}^d
\sigma_{i,j}(t,z,l) \varphi_{z_i, z_j}(z,l,\xi) \\
& - \Big ( \xi \int_E  \left(\nu(t, z,l,u) - 1\right) \widetilde \nu(\ud u) \Big ) \,  \varphi_{\xi}(z,l,\xi)\\
&+    \int_{E} \left[\varphi\left(z+\gamma^Z(t,z,l,u), l+\gamma^L(t,z,l,u), \xi \nu(t, z,l ,u) \right)-\varphi(z,l, \xi)\right]   \widetilde \nu(\ud
u)
\end{align}
for every $(t,z,l,\xi)\in \bar{D}$ and  for every function $(z,l,\xi)\to \varphi(z,l,\xi)$ which is bounded, $\mathcal{C}^2$ in $z$, continuous in
$l$ and $\mathcal{C}^1$ in $\xi$. Here $\varphi_{\xi}$ indicates the first derivative of $\varphi$ with respect to $\xi$.

Note that the system  \eqref{eq:Z}--\eqref{eq:L}--\eqref{eq:xi} satisfies Conditions (2.1)--(2.6) in  \citet{bib:pham-98}. Indeed, Conditions (2.1),
(2.2), (2.5) and (2.6) follow directly from  our assumptions (A.1), (A.2), (A.4) and (A.5);  Conditions (2.3) and (2.4) follow from (A.2), (A.3) and
from the fact that on $\bar{D}$ the mapping
\begin{align}
(t,x,\xi)\to\xi\left(\nu(t,x,u) - 1\right)
\end{align}
is bounded and Lipschitz, as it is a product of two bounded Lipschitz functions.
Therefore we can now apply  \citet[Theorem 3.1 and Proposition 3.3]{bib:pham-98} and get that the function $\widetilde v$ in equation
\eqref{eq:vtilde} is continuous in $\bar{D}$ and  Lipschitz in $(x,\xi)$, uniformly in $t$ (i.e. $\widetilde{v} \in W^1(\bar D)$). Moreover
$\widetilde v$ is a viscosity solution of the backward equation
\begin{align}
&\widetilde v_t(t,z,l,\xi)+\widetilde{\mathcal{L}}^{\widetilde{X}}_t \widetilde{v} (t,z,l,\xi)=c(t,x) \widetilde{v}(t,z,l,\xi)+\xi f(t,z,l), \qquad (t,z,l,\xi)
\in \bar D,\\
&\widetilde v(T,z,l,\xi)=g(l,z)\xi, \qquad(z,l,\xi)\in \R^{d}\times \R \times [0, e^{\widetilde \lambda T}].
\end{align}
Note that here we need  the alternative representation  \eqref{eq:vtilde-2} for $\widetilde v$ to get the dividend term $\xi f(t,z,l)$.

\subsection*{Step~2.} Let $\phi:[0,T]\times \R^d \times \R \to \R$ be a smooth function and define the function $\widetilde \phi: \bar D \to R$ by
$\widetilde \phi(t,z,l,\xi)=\phi (t,z,l)\xi$. Then for every $(t,z,l,\xi)\in \bar{D}$ we have that
\begin{align}
\widetilde{\mathcal{L}}^{\widetilde X}_t \ \widetilde \phi (t,z,l,\xi) &=
\xi \bigg\{\sum_{i=1}^d a_i(t,z,l) \phi_{z_i}(t,z,l)+\frac{1}{2}\sum_{i,j=1}^d \sigma_{i,j}(t,z,l) \phi_{z_i, z_j}(t,z,l)  \\&+  \int_E [\phi(t,
z+\gamma^Z(t,z,l,u), l+\gamma^L(t,z,l,u))-\phi(t,z,l)]\nu(t, z,l;\ud u)\bigg\},
\end{align}
and this is of course equal to $\xi \mathcal{L}_t\phi (t,z,l).$
Consequently we see that $v$ is a viscosity solution of the {\em original} backward PIDE \eqref{eq:PDE_w_integral}.

\subsection*{Step~3.} We finally want to show that function $v$ is a classical solution of the backward PIDE~\eqref{eq:PDE_w_integral} and hence, in
particular,  that $v$ is $\mathcal C^1$ in $t$ and $\mathcal C^2$ in $z$. For this we modify the  fixed point argument used in the proof of
\citet[Proposition~5.3]{bib:pham-98}\footnote{Notice that \citet[Proposition~5.3]{bib:pham-98} cannot be applied directly  to the function $\widetilde v$, since that result requires uniform ellipticality of the diffusion coefficient of $X$ (which is not satisfied in our setup due to the presence of the pure jump component $L$).}.
 Define for a bounded  function $\varphi:[0,T]\times \R^d\times \R \to \R$ that is Lipschitz in $(z,l)$,
uniformly in $t$,  the function $F_{[\varphi]}$ as
\begin{align}
F_{[\varphi]}(t,z,l):= \int_E [\varphi(t, z+\gamma^Z(t,z,l,u), l+\gamma^L(t,z,l,u))-\varphi(t,z,l)]\nu(t, z,l;\ud u).
\end{align}
Using (A2) and (A3) in Assumption \ref{ass:regularity} it is easily seen that $F_{[\varphi]}$ is Lipschitz in $(z,l)$, uniformly in $t$.

Recall the definition of the differential operator $\mathcal{L}^*_{(t,l)}$ from equation \eqref{eq:generator*}.
It follows from \citet[Lemma~2.1]{bib:pham-98}  that  $v$ is also a  viscosity solution of the backward equation
\begin{align}\label{eq:PDE_wo_integral}
&v_t(t,z, l)+\mathcal{L}^*_{t, l} v(t,z, l)+F_{[v]}(t,z, l)=c(t,z,l)v(t,z, l)+f(t,z, l), \quad (t,z)\in [0,T]\times \R^d\\
&v(T,z, l)=g(z,  l), \qquad z\in \R^d,
\end{align}
(see \citet[page 22]{bib:pham-98} for details). Note that  equation \eqref{eq:PDE_wo_integral} is a linear parabolic  partial differential equation
(and not a PIDE), and   a bounded  classical solution  $u(t,z,l)$ to~\eqref{eq:PDE_wo_integral} exists by Assumption~\ref{ass:problem2}. Now  $u$ is
clearly also a viscosity solution of  \eqref{eq:PDE_wo_integral}. Uniqueness results for  viscosity solutions  of linear parabolic PDEs imply
that $u=v$ and the regularity of $v$  follows.

Finally, uniqueness of classical solutions of \eqref{eq:PDE_w_integral} follows, for instance, from  \citet[Proposition 5.2]{bib:pham-98}.

\end{document}